\documentclass[12pt]{article}

 

\usepackage{amssymb}
\usepackage{amsmath} 
\usepackage{amsfonts}
 
\usepackage{amsthm}

\DeclareMathAlphabet{\mathpzc}{OT1}{pzc}{m}{it}

\theoremstyle{plain} 
\newtheorem{theorem}{Theorem}[section] 
\newtheorem{lemma}[theorem]{Lemma}

\newtheorem{question}[theorem]{Question}

 
\theoremstyle{definition}

 
\theoremstyle{remark} 
\newtheorem{remark}{Remark}



























\makeatletter
{\newcount\@hour}
{\newcount\@minute}
\def\timenow{\@hour=\time \divide\@hour by 60
\number\@hour:
  \multiply\@hour by 60 \@minute=\time
  \global\advance\@minute by -\@hour
  \ifnum\@minute<10 0\number\@minute\else
  \number\@minute\fi}
\def\ctimenow{\hfil{\tt \jobname.tex, \today~Time: \timenow }\hfil}
      \let\@oddfoot\ctimenow\let\@evenfoot\ctimenow
\makeatother

\pagestyle{myheadings}

%
%



\begin{document}

\begin{center}{\bf \Large
Borel complexity of the family of attractors for weak IFSs
}
\end{center}
\smallskip
\begin{center}
By
\end{center}
\smallskip
\begin{center} Pawe\l{} Klinga and Adam Kwela
\end{center}

\begin{abstract}
This  paper  is  an  attempt  to  measure  the  difference between  the  family  of  iterated function systems  attractors  and  a  broader  family,  the  set of attractors for weak iterated function systems. We discuss Borel complexity of the set wIFS$^d$ of attractors for weak iterated function systems acting on $[0,1]^d$ (as a subset of the hyperspace $K([0,1]^d)$ of all compact subsets of $[0,1]^d$ equipped in the Hausdorff metric). We prove that wIFS$^d$ is $G_{\delta\sigma}$-hard in $K([0,1]^d)$, for all $d\in\mathbb{N}$. In particular, wIFS$^d$ is not $F_{\sigma\delta}$ (in contrast to the family IFS$^d$ of attractors for classical iterated function systems acting on $[0,1]^d$, which is $F_{\sigma}$). Moreover, we show that in the one-dimensional case, wIFS$^1$ is an analytic subset of $K([0,1])$.
\let\thefootnote\relax\footnote{2010 Mathematics Subject Classification: Primary: 26A18. Secondary: 03E15, 28A05, 28A80.
}
\let\thefootnote\relax\footnote{Key words and phrases: attractors, iterated function systems, Borel sets, analytic sets}
\end{abstract}

\section{Introduction}

In \cite[Theorem 3.9]{DS1} (see also \cite{DS2}) E. D'Aniello and T.H. Steele proved that the family IFS$^d$ of attractors for iterated function systems acting on a metric space $[0,1]^d,$ $d\in\mathbb{N},$ is a meager $F_\sigma$ subset of the space $K([0,1]^d)$ of compact subsets of $[0,1]^d$ equipped with the Hausdorff metric (note that in \cite[Theorem 5]{SS} it is stated that IFS$^d$ is a nowhere dense $F_\sigma$ subset of $K([0,1]^d)$, which is not true as IFS$^d$ is dense -- it contains all finite subsets of $[0,1]^d$; cf. \cite[Lemma 4.3]{DS3}). In \cite[Theorem 4.1]{my} it is shown that the family wIFS$^d$ of attractors for weak iterated function systems acting on $[0,1]^d,$ $d\in\mathbb{N},$ is also meager. Thus, wIFS$^d$ contains the meager $F_\sigma$ set IFS$^d$ and is contained in another meager $F_\sigma$ subset of $K([0,1]^d)$.

Our main aim  is  to  measure  the  difference between  the  families IFS$^d$ and wIFS$^d$. This paper is an attempt to calculate the descriptive complexity of wIFS$^d$. We prove that wIFS$^1$ is a $G_{\delta\sigma}$-hard analytic subset of $K([0,1])$ and wIFS$^d$, for $d>1$, is a $G_{\delta\sigma}$-hard subset of $K([0,1]^d)$. In particular, wIFS$^d$ is not $F_{\sigma\delta}$, which shows that it is a more complicated object than IFS$^d$. 

In Section 2 we provide some background needed in our considerations. All results are proved in Section 3.

\section{Preliminaries}

\subsection{Binary sequences}

By $\{0,1\}^\mathbb{N}$ we denote the family of all sequences $\alpha:\mathbb{N}\to\{0,1\}$. Similarly, $\{0,1\}^n$ is the family of all sequences $\alpha:\{1,\ldots,n\}\to\{0,1\}$ of length $n$ and we use the symbol $\{0,1\}^{<\mathbb{N}}$ for the family of all finite sequences of zeros and ones, i.e., $\{0,1\}^{<\mathbb{N}}=\bigcup_n \{0,1\}^n$.

If $s$ is a finite sequence of real numbers then by $lh(s)$ we denote its length. If $s=(s_1,\ldots,s_n)\in\mathbb{R}^n$ (so $lh(s)=n$) and $k\leq lh(s)$ then $s|k=(s_1,\ldots,s_k)$ is the restriction of $s$ to $k$. 

\subsection{IFS attractors}

By $K(X)$ we denote the collection of all compact subsets of a compact metric space $(X,d)$. Throughout the paper we will assume that $X=[0,1]^d$ for some $d\in\mathbb{N}$. However, the rest of this subsection is true for any compact $X$.

A function $f\colon X \to X$ is called a contraction on $X$ whenever there exists a constant $L\in [0,1)$ such that for every $x,y\in X$ it is true that $d(f(x),f(y))\leqslant L\cdot d(x,y)$. 

Every finite set $\{ s_1, \dots, s_k \}$ of contractions acting on $X$ will be called an iterated function system or, in short, IFS. By $\mathcal{S}\colon K(X) \to K(X)$ we denote the Hutchinson operator for $\{ s_1, \dots, s_k \}$, i.e.
$$\mathcal{S}(A) = \bigcup_{i=1}^k s_i[A].$$
If $(X,d)$ is complete, then so is $(K(X),d_H)$, where $d_H$ is the Hausdorff metric. Thus, by applying the Banach fixed-point theorem, the equation $\mathcal{S}(A) = A$ has a unique compact solution. Such set is called the attractor for the iterated function system $\{ s_1, \dots, s_k \}$. The form of the Hutchinson operator justifies why attractors are often used to describe fractals, since many well known self-similar sets satisfy $\mathcal{S}(A)=A$ when the contractions comprising $S$ are linear.

By IFS$^d$ we will denote the set of all attractors for iterated function systems acting on $[0,1]^d$, i.e., a compact set $A\subseteq[0,1]^d$ is in IFS$^d$ if there is an iterated function system acting on $[0,1]^d$ such that $\mathcal{S}(A) = A$.

\subsection{Weak IFS attractors}

A finite set of functions $\{s_1,\dots,s_k\}$ acting on $X$ is called a weak iterated function system, or wIFS, if for each $i\in\{1,\dots,k\}$ and every pair of distinct points $x,y\in X$ it is true that $d(s_i(x),s_i(y))<d(x,y)$. The functions themselves will be called weak contractions. A compact set $A\subseteq X$ satisfying $A=\bigcup_{i=1}^k s_i[A]$ is called an attractor for $\{s_1,\dots,s_k\}$. By \cite{E}, if $X$ is compact then every weak iterated function system has a unique attractor.

Obviously, each attractor of some IFS is an attractor of some wIFS. However, the reverse inclusion is not true -- see \cite{NFM} for counterexamples.

By wIFS$^d$ we will denote the set of all attractors for weak iterated function systems acting on $[0,1]^d$, i.e., a compact set $A\subseteq[0,1]^d$ is in wIFS$^d$ if there is a weak iterated function system $\{s_1,\dots,s_k\}$ acting on $[0,1]^d$ such that $A=\bigcup_{i=1}^k s_i[A]$.

\subsection{Borel hierarchy}

Let $\Gamma$ be a Borel class (for instance $\Gamma=F_\sigma$ or $\Gamma=G_{\delta\sigma}$). We say that a subset $A$ of a Polish space $X$ is $\Gamma$-hard if for each Polish space $Y$ and each $B\in\Gamma(Y)$ there is a continuous function $f:Y\to X$ such that $f^{-1}[A]=B$. If additionally $A\in\Gamma(X)$ then we say that $A$ is $\Gamma$-complete. 

$\Gamma$-complete sets can be viewed as the most complicated members of the class $\Gamma$ -- they are in the class $\Gamma$ but not in any lower Borel class. For instance, a $G_{\delta\sigma}$-complete set cannot be $F_{\sigma\delta}$.

Observe that if $B\subseteq Y$ is $\Gamma$-complete and there is a continuous function $f:Y\to X$ such that $f^{-1}[A]=B$ then $A$ is $\Gamma$-hard.

Examples of $\Gamma$-complete sets for various Borel classes $\Gamma$ can be found in \cite{Kechris}. Such examples, especially the ones with a nice combinatorial definition, are often applied to show (with the use of the above paragraph) that some set is $\Gamma$-hard.

One particular example is especially important to us. Treating the power set $\mathcal{P}(X)$ of a set $X$ as the space $\{0,1\}^X$ of all functions $f:X\to \{0,1\}$ (equipped with the product topology, where the space $\left\{0,1\right\}$ carries the discrete topology) by identifying subsets of $X$ with their characteristic functions, allows us to talk about Borel complexity of subsets of $\mathcal{P}(X)$. The aforementioned example is:
$$\emptyset\times\text{Fin}=\left\{A\subseteq\mathbb{N}^2:\forall_n A_n\text{ is finite}\right\},$$
where $A_n=\{k\in\mathbb{N}: (n,k)\in A\}$, which is $F_{\sigma\delta}$-complete (see \cite[Corollary 3.4 and discussion below it]{Solecki}).

\section{Main results}

\begin{theorem}
wIFS$^1$ is an analytic subset of $K([0,1])$.
\end{theorem}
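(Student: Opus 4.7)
The plan is to realise wIFS$^1$ as a countable union of projections of closed subsets of Polish product spaces, which immediately makes it analytic.

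First, I would verify that the space of weak contractions forms a Polish parameter space. Let $W$ denote the set of continuous self-maps $s\colon [0,1]\to [0,1]$ satisfying $|s(x)-s(y)|<|x-y|$ for all $x\neq y$, regarded as a subspace of the Polish space $C([0,1],[0,1])$ equipped with the uniform metric. It suffices to check that $W$ is $G_\delta$: exploiting compactness of the set $\{(x,y)\in[0,1]^2 : |x-y|\geq 1/n\}$, a continuous self-map $s$ lies in $W$ exactly when for every $n\in\mathbb{N}$,
\[
\sup\bigl\{\,|s(x)-s(y)|-|x-y|\ :\ |x-y|\geq 1/n\,\bigr\}<0,
\]
and each such condition is open in $s$ (the supremum is attained by compactness and depends continuously on $s$). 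Hence $W$, and therefore $W^k$ for every $k\geq 1$, is Polish.

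Next, for each $k\geq 1$ I would consider
\[
R_k=\bigl\{(A,s_1,\dots,s_k)\in K([0,1])\times W^k \,:\, A=\textstyle\bigcup_{i=1}^{k}s_i[A]\bigr\}.
\]
The main technical ingredient is that the evaluation map $(A,s)\mapsto s[A]$ is continuous from $K([0,1])\times C([0,1],[0,1])$ to $K([0,1])$, a routine $\varepsilon$-$\delta$ argument using uniform continuity of each $s$ on the compact interval $[0,1]$. Combining this with continuity of the binary union on $K([0,1])$ (which satisfies $d_H(A\cup B,A'\cup B')\leq\max\{d_H(A,A'),d_H(B,B')\}$), the map $(A,s_1,\dots,s_k)\mapsto\bigcup_i s_i[A]$ is continuous, so $R_k$ is the preimage of the closed diagonal in $K([0,1])\times K([0,1])$ and is therefore closed in $K([0,1])\times W^k$.

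Finally, unraveling the definition, a compact set $A$ belongs to wIFS$^1$ iff there exist $k\geq 1$ and $(s_1,\dots,s_k)\in W^k$ with $(A,s_1,\dots,s_k)\in R_k$; equivalently, $\text{wIFS}^1=\bigcup_{k\geq 1}\pi_1(R_k)$, where $\pi_1$ denotes projection onto $K([0,1])$. Since each $R_k$ is closed in a Polish space, each $\pi_1(R_k)$ is a continuous image of a Polish space and hence analytic, and countable unions of analytic sets are analytic. I do not foresee a substantial obstacle here: the only slightly delicate step is the compactness argument used to convert the pointwise strict inequality defining weak contractions into a countable conjunction of open conditions, which is precisely what separates weak contractions from strict ones.
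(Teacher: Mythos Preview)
Your argument is correct and shares the paper's overall strategy: both establish that the space $W$ of weak contractions on $[0,1]$ is a $G_\delta$ subset of $C([0,1],[0,1])$ (hence Polish), decompose wIFS$^1$ into countably many pieces indexed by the number of maps in the system, and realise each piece as a continuous image of a Polish space. The difference is only in the last step. The paper invokes a lemma of D'Aniello and Steele stating that the attractor map $W^k\to K([0,1])$, sending a $k$-tuple of weak contractions to its unique attractor, is continuous, and uses the image of this map directly. You instead work with the fixed-point relation $R_k=\{(A,s_1,\dots,s_k):A=\bigcup_i s_i[A]\}$, check that it is closed using only the elementary continuity of $(A,s)\mapsto s[A]$ and of finite unions in the hyperspace, and then project. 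This sidesteps the nontrivial continuity of the attractor assignment (which requires the Edelstein-type uniqueness and a convergence argument), making your proof more self-contained; the paper's route, by contrast, shows more, namely that each stratum is already a continuous image of the parameter space $W^k$ itself.
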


\begin{proof}
	We will use the following characterization: a set $A\subseteq X$ is analytic if and only if there exists a Polish space $Y$ and a continuous map  $f \colon Y \to X$ such that $f[Y]=A$ (see \cite{Kechris} or \cite{Srivastava}).
	
	Let $L_n^W$ denote the set of attractors for systems of at most $n$ weak contractions acting on $[0,1]$. Since $wIFS^1 = \bigcup_n L_n^W$, it is sufficient to show that each $L_n^W$ is analytic and use the fact that the family of analytic sets is closed under countable unions.
	
	The reasoning will be based on \cite{DS3}. Specifically, authors define the set
	$$ wLip1(X) = \{ f \in C(X,X) \colon \forall_{x,y\in X, x\neq y}\: d(f(x), f(y))<d(x,y) \}  $$
	($C(X,X)$ is the Banach space of all continuous functions from $X$ to $X$ equipped with the supremum norm). 	In other words, $wLip1(X)$ is the family of weak contractions acting on $X$. Then the authors consider the Cartesian product $(wLip1(X))^n$ and show that the following function:
	$$ f_X\colon (wLip1(X))^n \to K(X) $$
	which maps a set of $n$ weak contractions to its unique attractor, is continuous provided that $X$ is a compact metric space \cite[Lemma 4.6]{DS3}. Therefore, $f_{[0,1]}[(wLip1([0,1]))^n] = L_n^W$. To finish the proof we need to make sure that the domain is Polish -- and it is in the case of $X=[0,1]$, as in \cite[Remark 4.4 and Lemma 4.5]{DS3} the authors show that $wLip1([0,1])$ is a $G_\delta$ subset of a closed subset of $C([0,1],[0,1])$.
\end{proof}

The rest of the paper is devoted to showing that wIFS$^d$ is $G_{\delta\sigma}$-hard in $K([0,1]^d)$, for every $d\in\mathbb{N}$. We will mostly work in the one-dimensional case, as other cases will follow from it.

In the lemma below the symbols $\exists_n^\infty$ and $\forall_m^\infty$ mean ``there exist infinitely many $n$" and ``for all $m$ excluding a finite number", respectively.

\begin{lemma}\label{fsigmadelta-complete}
	The set $\{ A \subseteq \mathbb{N}^2  \colon \exists_n^\infty\: \forall_m^\infty\:\: (n,m) \in A \}$ is $F_{\sigma\delta}$-complete.
\end{lemma}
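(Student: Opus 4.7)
The plan is to sandwich $B := \{A \subseteq \mathbb{N}^2 : \exists_n^\infty \forall_m^\infty\, (n,m) \in A\}$ between an $F_{\sigma\delta}$ upper bound and a continuous reduction from the $F_{\sigma\delta}$-complete set $\emptyset\times\text{Fin}$ mentioned in the preliminaries. For the upper bound I would write
$$B \;=\; \bigcap_{N}\ \bigcup_{n > N}\ \bigcup_{K}\ \bigl\{A \subseteq \mathbb{N}^2 : (n,m) \in A \text{ for every } m > K\bigr\}.$$
Each innermost set is closed (an intersection of clopen conditions), so the union over $K$ is $F_\sigma$, the union over $n>N$ is $F_\sigma$, and the intersection over $N$ places $B$ in $F_{\sigma\delta}$.

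For hardness I would define a continuous reduction $f : \mathcal{P}(\mathbb{N}^2) \to \mathcal{P}(\mathbb{N}^2)$ by
$$(k,m) \in f(A) \iff m \notin \bigcup_{n \leq k} A_n.$$
Continuity is automatic because the value of $(k,m) \in f(A)$ depends only on the finitely many bits $A(n,m)$ for $n \leq k$, so every basic clopen subset of the codomain pulls back to a clopen set.

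Then I would verify $f^{-1}[B] = \emptyset \times \text{Fin}$. If every $A_n$ is finite, then $\bigcup_{n\le k} A_n$ is finite for each $k$, hence every row of $f(A)$ is cofinite and $f(A) \in B$. Conversely, if $A_{n_0}$ is infinite for some least $n_0$, then for each $k \geq n_0$ the $k$-th row of $f(A)$ is disjoint from the infinite set $A_{n_0}$, so it is not cofinite; at most the first $n_0 - 1$ rows of $f(A)$ can be cofinite, and hence $f(A) \notin B$. Combined with the upper bound, this gives $F_{\sigma\delta}$-completeness.

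The main obstacle is choosing the right reduction. The naive idea of complementing row-by-row fails: it would translate ``all rows finite'' into ``all rows cofinite'', which is strictly stronger than what $B$ asks for, so the backward implication would break. The key trick is the cumulative union $\bigcup_{n\leq k}A_n$, which chains the finitary conditions together so that a single infinite row $A_{n_0}$ destroys cofiniteness of every subsequent row of $f(A)$; this makes ``infinitely many cofinite rows in $f(A)$'' equivalent to ``all rows of $A$ finite'', as required.
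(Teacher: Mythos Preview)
Your proof is correct and follows essentially the same approach as the paper: the same $F_{\sigma\delta}$ decomposition for the upper bound, and the same cumulative-union reduction $m\mapsto\bigcup_{n\le k}A_n$ from $\emptyset\times\mathrm{Fin}$ for hardness. The only cosmetic difference is that the paper first reduces to the complementary set $Z=\{A:\exists_n^\infty\,A_n\in\mathrm{Fin}\}$ and then composes with the complementation homeomorphism, whereas you absorb that complementation directly into the definition of $f$.
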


\begin{proof}
	Let us start with the following observation:
	\begin{align*}
	& \{ A \subseteq \mathbb{N}^2  \colon \exists_n^\infty\: \forall_m^\infty\:\: (n,m) \in A \} = \\
	=\:\:& \{ A \subseteq \mathbb{N}^2  \colon \forall_i\:\exists_{n\geqslant i}\:\exists_j\:\forall_{m\geqslant j} \:\: (n,m) \in A \} = \\
	=\:\:& \bigcap_i \bigcup_{n\geqslant i}\bigcup_j \bigcap_{m\geqslant j} \left\{ A \subseteq \mathbb{N}^2 \colon (n,m) \in A \right\}
	\end{align*}
and since $\left\{ A \subseteq \mathbb{N}^2 \colon (n,m) \in A \right\}$ is a member of the base of the topology, we automatically obtain that the set in question is $F_{\sigma\delta}$. It is therefore sufficient to show that it is $F_{\sigma\delta}$-hard.

For convenience let us denote
$$\{ A \colon \exists_n^\infty\:\forall_n^\infty\:\: (n,m) \notin A \} = \{A \colon \exists_n^\infty\: A_n \in Fin \} = Z,$$
where $Fin$ denotes the family of all finite subsets of $\mathbb{N}$ and $A_n$ is a vertical section of the set $A$ placed at the $n$th coordinate.

Let us define the following function $ f \colon\mathcal{P}(\mathbb{N}^2) \to \mathcal{P}(\mathbb{N}^2)$:
$$ f(A) = \left\{ (n,m)\in\mathbb{N}^2 \colon m \in \bigcup_{i\leqslant n} A_i \right\}. $$
We need to show the continuity of the function as well as 
$$ f^{-1}[Z] = \emptyset \times Fin. $$
This will finish the proof, since the map $B\mapsto B^c$ (mapping a subset of $\mathbb{N}^2$ to its complement) is clearly continuous and $B\in Z$ is equivalent to $B^c\in\{ A \subseteq \mathbb{N}^2  \colon \exists_n^\infty\: \forall_m^\infty\:\: (n,m) \in A \}$.

We will start with the inclusion $ f^{-1}[Z] \subseteq \emptyset \times Fin. $ Fix $B\in f^{-1}[Z]$. Denote $A = f[B]$. Then $A\in Z \cap f[\mathcal{P}(\mathbb{N}^2)]$. If there exists $n$ such that $A_n\notin Fin$, then for every $m\geqslant n$ we have $A_m\notin Fin$, since $A\in f[\mathcal{P}(\mathbb{N}^2)]$. Therefore $A \notin Z$, a contradiction. Hence, for every $n$ it is true that $A_n\in Fin$. If there existed $n$ such that $B_n\notin Fin$, then $\bigcup_{i\leqslant n} B_i = A_n \notin Fin$. We obtain $B \in \emptyset \times Fin.$

Let us move to the reverse inclusion. Fix $B\notin f^{-1}[Z]$. Then $f[B]\notin Z$. Let us denote $A=f[B]$. We have that there exists $n$ such that $\bigcup_{i\leqslant n} B_i = A_n \notin Fin$. Therefore there exists $i\leqslant n$ such that $B_i \notin Fin$. Hence, $B\notin \emptyset\times Fin$.

For the final part we will show the continuity of the function. To do this, we will show that the preimages of sets from the subbase are open, i.e.
\begin{enumerate}
	\item $f^{-1}[\left\{ A \colon (n,m) \in A  \right\}]$ is open.
	\item $f^{-1}[\left\{ A \colon (n,m) \notin A  \right\}]$ is open.
\end{enumerate}
For 1. notice the following:
\begin{align*}
&f^{-1}[\left\{ A \colon (n,m) \in A  \right\}] = \\
= & \left\{ B \colon \exists_{i \leqslant n}\:\:(i,m) \in B \right\} = \\
= & \bigcup_{i\leqslant n} \{B \colon (i,m) \in B \}
\end{align*}
and the last set is indeed open.
Similarly for 2.:
\begin{align*}
&f^{-1}[\left\{ A \colon (n,m) \notin A  \right\}] = \\
= & \left\{ B \colon \forall_{i \leqslant n}\:\:(i,m) \notin B \right\} = \\
= & \bigcap_{i\leqslant n} \{B \colon (i,m) \notin B \}
\end{align*}
and we obtain a finite intersection of open sets, i.e. an open set.
\end{proof}

\begin{remark}
\label{rem}
	In the proof of Lemma \ref{main_theorem} we will use a certain modification of the Cantor ternary set. It is constructed by removing from $[0,1]$ a $(0.1,0.9)$ interval and leaving two intervals on each side so that each is a tenth of the original interval. Each next step follows accordingly. We will refer to it as the ``decimal" Cantor set (in contrast to the original ternary Cantor set). Equivalently, the decimal Cantor set is given by 
	$$\left\{\sum_{n}x_n\cdot\frac{9}{10^n}:x_n\in\{0,1\}\right\}.$$
	
	We will need a certain way of addressing specific elements of this set. We will describe it now.
	
	Let $s\in \{0,1\}^{lh(s)}$ be a finite sequence. Put
	$$d_s = \sum_{1\leqslant k\leqslant lh(s)} e_{lh(s)}^{s|k} ,$$
	where $e_{i}^{s|k}$ is the $i$th term of the sequence $e^{s|k}$ and 
	$$ e^s = \begin{cases}
		(0,0,\dots)\text{, if $s$ is an empty sequence or a sequence which ends with 0,}\\
		( 0,\dots, 0, \frac{9}{10^{lh(s)}} \cdot \frac{1}{2}, \frac{9}{10^{lh(s)}} \cdot \frac{1}{4}, \dots ), \text{ otherwise},
	\end{cases}$$
	where in the other case the number of zeros in front is $lh(s)-1$.
	
	We will show the following two properties of such addressing of the decimal Cantor set:
	\begin{itemize}
		\item[(1)] for each $x\in \{0,1\}^\mathbb{N}$ we have $\sum_{n} d_{x|n} = \sum_{k, x(k) = 1} \frac{9}{10^k} $, which will show that $\{\sum_n d_{x|n}:x\in \{0,1\}^\mathbb{N}\}$ is indeed the decimal Cantor set, 
		\item[(2)] if $s,t\in\{0,1\}^{<\mathbb{N}}$, $s <_{lex} t, lh(s)=lh(t)$ then $d_s<d_t$, where $<_{lex}$ is the lexicographic order.
	\end{itemize}

Let us start by showing (1). Fix $x\in \{0,1\}^\mathbb{N}$.
$$\sum_{n} d_{x|n} = \sum_{n} \sum_{k\leqslant n} e_{n}^{x|k} = \sum_{n} \sum_{k} e_{n}^{x|k} = (*), $$
where the last equality holds because $e_n^{x|k} = 0$ for $k>n$,
$$(*) = \sum_{k} \sum_{n} e_{n}^{x|k} = \sum_{k, x(k) = 1} \frac{9}{10^k}, $$
because for those $k$ where $x(k) = 0$ the sum consists of zeros.

Let us proceed to (2). Let $m$ be the index of the first term where the sequences $s$ and $t$ differ, i.e. $s|(m-1) = t|(m-1)$ and $s_m = 0, t_m = 1$. Denote $n = lh(s) = lh(t)$.
$$  d_t-d_s =  \sum_{k<m} e_n^{t|k} + e_n^{t|m} + \sum_{m<k \leqslant n } e_n^{t|k} - \sum_{k<m} e_n^{s|k} - e_n^{s|m} - \sum_{m<k \leqslant n } e_n^{s|k}  = $$
$$ =  e_n^{t|m} + \sum_{m<k \leqslant n } e_n^{t|k} - e_n^{s|m} - \sum_{m<k \leqslant n } e_n^{s|k}  \geqslant $$
$$\geqslant \frac{9}{10^m} \cdot \frac{1}{2^{n-m+1}} - \left( \frac{9}{10^{m+1}} \cdot \frac{1}{2^{n-m}} + \frac{9}{10^{m+2}} \cdot \frac{1}{2^{n-m-1}} + \dots + \frac{9}{10^n} \cdot \frac{1}{2} \right) = (*),$$
where the inequality holds since $e_n^{t|m} = \frac{9}{10^m} \cdot \frac{1}{2^{n-m+1}} $ and the content of the parenthesis covers the extreme case where the difference would be maximal.

We need to show that $(*)>0$. It would be equivalent to:
$$ \frac{9}{10^m} \cdot \frac{1}{2^{n-m+1}} > \frac{9}{10^{m+1}} \cdot \frac{1}{2^{n-m}} + \frac{9}{10^{m+2}} \cdot \frac{1}{2^{n-m-1}} + \dots + \frac{9}{10^n} \cdot \frac{1}{2} \iff $$
$$ \frac{1}{2^{n-m+1}} > \frac{1}{10} \cdot \frac{1}{2^{n-m}} + \frac{1}{10^{2}} \cdot \frac{1}{2^{n-m-1}} + \dots + \frac{1}{10^{n-m}} \cdot \frac{1}{2} \iff $$
$$ \frac{1}{2^{n-m+1}} > \sum_{i=1}^{n-m} \left( \frac{1}{2^{n-m}}\cdot \frac{1}{10} \right)\cdot \left( \frac{2}{10} \right)^{i-1} \iff $$
$$ \frac{1}{2^{n-m+1}} >  \frac{1}{2^{n-m}}\cdot \frac{1}{10} \cdot \frac{1 - \left( \frac{ 2 }{10} \right)^{n-m} }{1-\frac{2}{10}} \iff $$
$$ \frac{1}{2^{n-m+1}} >  \frac{1}{2^{n-m}}\cdot \frac{1 - \left( \frac{ 2 }{10} \right)^{n-m} }{8} \iff $$
$$ \frac{1}{2^{n-m+1}} >  \frac{1}{2^{n-m}}\cdot \frac{1}{8} \cdot \left( \frac{10^{n-m} - 2^{n-m}}{10^{n-m}} \right) \iff $$
$$ \frac{1}{2} \cdot 8 \cdot 10^{n-m} > 10^{n-m} - 2^{n-m} \iff $$
$$ 3 \cdot 10^{n-m} > - 2^{n-m} $$
and the final inequality is true. This concludes our remark.

\end{remark}

\begin{remark}
\label{def1}
In the next lemma we will need the following two technical definitions.

Denote by $D$ the decimal Cantor set (so $D=\{\sum_l d_{x|l}:x\in \{0,1\}^\mathbb{N}\}$) and let $\alpha \in \{0,1\}^\mathbb{N} $ and $\delta\in(0,1)$. We define $g_\alpha^\delta:D\to[0,1]$ by $g_\alpha^\delta(\sum_l d_{x|l})=\sum_l d_{x|l} \delta^{(1-\alpha_l)}$ for all $x\in \{0,1\}^\mathbb{N}$. 

Moreover, if $n\in\mathbb{N}$, $X\subseteq \{0,1\}^\mathbb{N}$ and $C=\{\sum_l c_{x_l}:x\in X\}$ for some $\{c_s:s\in\{0,1\}^{<\mathbb{N}}\}\subseteq\mathbb{R}$, then we define:
\begin{itemize}
\item $E^n_1\left(C\right)=\left\{\sum_l c_{x|l}:x\in X,x_{n+1}=0,x_{n+2}=0\right\},$ 
\item $E^n_2\left(C\right)=\left\{\sum_l c_{x|l}:x\in X,x_{n+1}=0,x_{n+2}=1\right\},$ 
\item $E^n_3\left(C\right)=\left\{\sum_l c_{x|l}:x\in X,x_{n+1}=1,x_{n+2}=0\right\},$ 
\item $E^n_4\left(C\right)=\left\{\sum_l c_{x|l}:x\in X,x_{n+1}=1,x_{n+2}=1\right\}.$ 
\end{itemize}
\end{remark}

\begin{lemma}
\label{delta}
	Suppose that $C = D \cap \left[ 0, \frac{1}{10^n} \right]$ for some $n\in\mathbb{N}$ (so $C=\{\sum_l d_{x|l}:x\in \{0,1\}^\mathbb{N},x_l=0\text{ for }l\leq n\}$) and fix $\delta \in (\frac{80}{89},1)$, $k\in\mathbb{N}$ and $\alpha \in \{0,1\}^\mathbb{N} $ such that $\alpha_l = 1$ for all $l\geqslant n+k$. 
	Then for every weak contraction $f$, the set $f[C]$ can intersect at most two of the sets $E^n_i(g^\delta_\alpha[C])$ for $i\leq 4$.
Moreover, $f[C]$ can intersect only two consecutive sets of this form.
\end{lemma}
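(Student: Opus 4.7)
The strategy is to exploit the natural decomposition of $C$ into four subcopies and to use weak contractivity together with the hypothesis $\delta>80/89$ to severely limit how many of the $E_i^n$ the image $f[C]$ can meet. I would decompose $C=F_1\cup F_2\cup F_3\cup F_4$, where $F_j$ corresponds to the four choices of $(x_{n+1},x_{n+2})\in\{0,1\}^2$. By property~(1) of Remark~\ref{rem} each $F_j$ is a translate of $D\cap[0,10^{-(n+2)}]$, so $\mathrm{diam}(F_j)=10^{-(n+2)}$ and the four pieces live respectively in the intervals $[0,1]$, $[9,10]$, $[90,91]$, $[99,100]$ (in units of $10^{-(n+2)}$), with inter-copy gaps $8\cdot 10^{-(n+2)}$ (between $F_1,F_2$ and $F_3,F_4$) and $80\cdot 10^{-(n+2)}$ (between $F_2,F_3$). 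Applying the same telescoping to $g^\delta_\alpha$ rewrites a typical element of $g^\delta_\alpha[C]$ as $\sum_{k:x_k=1}9w_k/10^k$ with $w_k=\sum_{l\geq k}\delta^{1-\alpha_l}/2^{l-k+1}\in[\delta,1]$ and $w_k=1$ once $k\geq n+k$; reading off positions and diameters of the $E_i^n$, I would show that the adjacent gap between consecutive $E_i^n$ exceeds $(9\delta-1)\cdot 10^{-(n+2)}$ and the middle gap exceeds $(90\delta-10)\cdot 10^{-(n+2)}$, so for $\delta>80/89$ every gap strictly exceeds $10^{-(n+2)}$.

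Next, weak contractivity of $f$ on the compact $F_j$ gives $\mathrm{diam}(f[F_j])<\mathrm{diam}(F_j)=10^{-(n+2)}$, hence $f[F_j]$ cannot cross any inter-$E_i^n$ gap and meets at most one $E_i^n$. This reduces the lemma to showing that $f[C]$ never meets two non-consecutive $E_i^n$, since any hitting pattern of size $\geq 3$ on four consecutive sets automatically contains a non-consecutive pair.

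For this last reduction I argue by contradiction: if $f[F_{j_a}]\cap E_a^n\neq\emptyset$ and $f[F_{j_b}]\cap E_b^n\neq\emptyset$ with $|a-b|\geq 2$ (forcing $j_a\neq j_b$), pick witnesses $y\in F_{j_a}$ and $z\in F_{j_b}$. Combining the strict inequality $|f(y)-f(z)|<|y-z|\leq\mathrm{diam}(F_{j_a}\cup F_{j_b})$ with the geometric lower bound $|f(y)-f(z)|\geq\mathrm{dist}(E_a^n,E_b^n)$ yields $\mathrm{dist}(E_a^n,E_b^n)<\mathrm{diam}(F_{j_a}\cup F_{j_b})$, and a finite case analysis on the pairs $(j_a,j_b)$ and $(a,b)$ shows, using $\delta>80/89$, that this inequality fails in every case. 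For most configurations the contradiction is immediate (e.g.\ $\mathrm{diam}(F_1\cup F_2)=10\cdot 10^{-(n+2)}$ is dwarfed by $\mathrm{dist}(E_1^n,E_3^n)$), so the only work is in the tight configurations.

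The main obstacle is the tight subcase $(j_a,j_b)=(2,3)$ paired with $(a,b)\in\{(1,3),(2,4)\}$: here $\mathrm{diam}(F_2\cup F_3)=82\cdot 10^{-(n+2)}$ is only marginally larger than the crude lower bound for $\mathrm{dist}(E_1^n,E_3^n)$, so the specific value $80/89$ (rather than a cleaner fraction) must be extracted through precise estimates of $\max E_1^n=\sum_{k>n+2}9w_k/10^k$ and $\min E_3^n=9w_{n+1}/10^{n+1}$, possibly by descending into the next level of subcopies $F_{j,\ell}$ and $E_{i,m}$ where the same bound $\delta>80/89$ can be reapplied on a rescaled geometry. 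Closing this case is the technical heart of the argument.
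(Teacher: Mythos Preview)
Your decomposition of $C$ into the four quarters $F_1,\dots,F_4$ and the telescoping identity $g^\delta_\alpha(\sum_l d_{x|l})=\sum_{k:x_k=1}9w_k/10^k$ with $w_k\in[\delta,1]$ are both correct, and so is the conclusion that each $f[F_j]$ meets at most one $E_i^n$. But the reduction to a pairwise comparison breaks down more broadly than you acknowledge. For $\delta$ just above $80/89$ your own estimates give only $\mathrm{dist}(E_1^n,E_3^n)\gtrsim (90\delta-1)\cdot 10^{-(n+2)}\approx 79.9\cdot 10^{-(n+2)}$, while $\mathrm{diam}(F_1\cup F_4)=100\cdot 10^{-(n+2)}$, $\mathrm{diam}(F_1\cup F_3)=\mathrm{diam}(F_2\cup F_4)=91\cdot 10^{-(n+2)}$, and $\mathrm{diam}(F_2\cup F_3)=82\cdot 10^{-(n+2)}$. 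So for $(a,b)=(1,3)$ not one but four of the six possible pairs $(j_a,j_b)$ fail the crude inequality, and the same happens for $(a,b)=(2,4)$ and even for $(a,b)=(1,4)$. Your claim that ``the only work is in the tight configurations $(j_a,j_b)=(2,3)$'' is therefore incorrect; the direct diameter--distance comparison simply does not close the argument at the threshold $80/89$.

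The paper avoids this by inducting on $k$, the parameter in the hypothesis $\alpha_l=1$ for $l\geq n+k$, which you never use. It splits $C$ only into halves $C_L,C_R$ and observes that on each half $g^\delta_\alpha$ acts as a translation (because the relevant $d_{x|l}$ differences are insensitive to $\alpha_{n+1}$), so the inductive hypothesis applies one level down. The specific constant $80/89$ enters precisely in comparing the big gap $8/10^{n+1}$ in $C$ to the distance $\delta(8/10^{n+1}+9/10^{n+2})$ between $E_1^n$ and $E_3^n$ in the base case, which is exactly the inequality $\delta>80/89$. Your proposed ``descent into the next level of subcopies'' is heading toward this induction, but without formulating the induction hypothesis on $k$ you have no mechanism to propagate the bound, and the argument as written remains a sketch with its technical heart missing.
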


\begin{proof}	
	We will prove it inductively with respect to $k$. 
	
	In the first step fix $n,\delta,f$ and $\alpha$ such that $\alpha_l = 1$ for all $l\geqslant n+1$. 
	Then $g^\delta_\alpha[C] = C$ (as $d_{x|l}=0$ whenever $l\leq n$ and $x\in \{0,1\}^\mathbb{N}$ is such that $x_m=0$ for all $m\leq n$). Therefore either $f[C]\cap (E^n_1(g^\delta_\alpha[C])\cup E^n_2(g^\delta_\alpha[C]))=\emptyset$ or $f[C]\cap (E^n_3(g^\delta_\alpha[C])\cup E^n_4(g^\delta_\alpha[C]))=\emptyset$. 
	
	Let us proceed to $k=2$, i.e., fix $n,\delta,f$ and $\alpha$ such that $\alpha_l = 1$ for all $l\geqslant n+2$. We have two cases. Either $\alpha_l=1$ for all $l\geq n+1$, which has been covered in the previous paragraph, or $\alpha_{n+1}=0$ and $\alpha_l=1$ for all $l\geq n+2$, which we need to discuss. Denote $C_L=C\cap\left[ 0, \frac{1}{10^{n+1}} \right]$, $C_R=C\cap\left[\frac{9}{10^{n+1}},\frac{1}{10^{n}} \right]$, $g^\delta_\alpha[C_L]=E^n_1(g^\delta_\alpha[C])\cup E^n_2(g^\delta_\alpha[C])$ and $g^\delta_\alpha[C_R]=E^n_3(g^\delta_\alpha[C])\cup E^n_4(g^\delta_\alpha[C])$. Observe that neither $f[C_L]$ nor $f[C_R]$ can intersect both $g^\delta_\alpha[C_L]$ and $g^\delta_\alpha[C_R]$ as the longest gaps in $C_L$ and $C_R$ have lengths $\frac{8}{10^{n+2}}$ while the distance between $g^\delta_\alpha[C_L]$ and $g^\delta_\alpha[C_R]$ is at least $\delta\frac{8}{10^{n+1}}>\frac{8}{10^{n+2}}$ (as $\delta>\frac{80}{89}>\frac{1}{10}$). What is more, note that if $x,y\in \{0,1\}^\mathbb{N}$ are such that $x_m=y_m=0$ for all $m\leq n$ and additionally $x|n+1=y|n+1$ (i.e., $x_{n+1}=y_{n+1}$) then 
	$$\sum_l d_{x|l} \delta^{(1-\alpha_l)}-\sum_l d_{y|l} \delta^{(1-\alpha_l)}=\sum_l d_{x|l}-\sum_l d_{y|l}.$$
In other words, $C_L$, $C_R$, $g^\delta_\alpha[C_L]$ and $g^\delta_\alpha[C_R]$ all look exactly the same (they are all translations of $C_L$). In particular, $f[C_L]$ cannot intersect two sets of the form $E^n_i(g^\delta_\alpha[C])$ for $i\leq 4$ (and neither can $f[C_R]$). Thus, $f[C]$ intersects at most two of them. Observe that the gap between $C_L$ and $C_R$ is $\frac{8}{10^{n+1}}$ while the distance between $E^n_1(g^\delta_\alpha[C])$ and $E^n_3(g^\delta_\alpha[C])$ (which is the same as the distance between $E^n_2(g^\delta_\alpha[C])$ and $E^n_4(g^\delta_\alpha[C])$ and smaller than the distance between $E^n_1(g^\delta_\alpha[C])$ and $E^n_4(g^\delta_\alpha[C])$) is at least $\delta\left(\frac{8}{10^{n+1}}+\frac{9}{10^{n+2}}\right)$. Since $\delta>\frac{80}{89}$, we have 
$$\frac{8}{10^{n+1}}<\delta\left(\frac{8}{10^{n+1}}+\frac{9}{10^{n+2}}\right).$$ 
This means that $f[C]$ can only intersect two consecutive sets of the form $E^n_i(g^\delta_\alpha[C])$ for $i\leq 4$.

	Now we move to the inductive step. Suppose that the thesis is true for some $k$. Again fix $n,\delta,f$ and $\alpha$ such that $\alpha_l = 1$ for all $l\geqslant n+k+1$. Let $C_L$ and $C_R$ be as before. Assume to the contrary that $f[C_L]\cup f[C_R]$ intersects at least three sets of the form $E^n_i(g^\delta_\alpha[C])$ for $i\leq 4$. Then either $f[C_L]$ or $f[C_R]$ intersect at least two of them. Without loss of generality we may assume that $f[C_L]$ has this property. Observe that $f[C_L]$ cannot intersect both $L=E^n_1(g^\delta_\alpha[C])\cup E^n_2(g^\delta_\alpha[C])$ and $R=E^n_3(g^\delta_\alpha[C])\cup E^n_4(g^\delta_\alpha[C])$ for the same reason as before (as $\delta>\frac{80}{89}>\frac{1}{10}$). Thus, without loss of generality we may assume that $C_L\subseteq L$. It follows from the inductive assumption that $C_L$ can only intersect two consecutive sets of the form $E^{n+1}_i(L)$ for $i\leq 4$ (note that $E^n_1(g^\delta_\alpha[C])=E^{n+1}_1(L)\cup E^{n+1}_2(L)$ and $E^n_2(g^\delta_\alpha[C])=E^{n+1}_3(L)\cup E^{n+1}_4(L)$). Hence, if $f[C_L]$ intersects two sets of the form $E^n_i(g^\delta_\alpha[C])$, then it has to intersect $E^{n+1}_2(L)$ and $E^{n+1}_3(L)$. Then $f[C_R]$ cannot intersect $R$ as the distance between $E^{n+1}_3(L)$ and $R$ is at least $\delta\left(\frac{8}{10^{n+1}}+\frac{9}{10^{n+2}}\right)$ while the gap between $C_L$ and $C_R$ is $\frac{8}{10^{n+1}}$ and $\delta>\frac{80}{89}$. This finishes the proof.
\end{proof}

Now we move to the most technical part from which $G_{\delta\sigma}$-hardness of wIFS$^d$ will easily follow.

\begin{lemma}\label{main_theorem}
There is a continuous function $\varphi\colon \mathcal{P}(\mathbb{N}^2) \to K([0,1])$ such that $ \varphi^{-1}[wIFS^1] = \mathcal{P}(\mathbb{N}^2) \setminus \{ A \colon \exists_n^\infty\: \forall_m^\infty\:\: (n,m) \in A \} $. In particular, wIFS$^1$ is $G_{\delta\sigma}$-hard in $K([0,1])$.
\end{lemma}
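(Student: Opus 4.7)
By Lemma~\ref{fsigmadelta-complete}, the set $B = \{A \subseteq \mathbb{N}^2 : \exists_n^\infty \forall_m^\infty \,(n,m) \in A\}$ is $F_{\sigma\delta}$-complete, so $\mathcal{P}(\mathbb{N}^2)\setminus B$ is $G_{\delta\sigma}$-complete, and any continuous $\varphi$ with $\varphi^{-1}[\text{wIFS}^1] = \mathcal{P}(\mathbb{N}^2)\setminus B$ immediately witnesses $G_{\delta\sigma}$-hardness of wIFS$^1$. My plan is to build such a $\varphi$ by using each section $A_n$ to shape one $\delta$-distorted decimal-Cantor block (in the sense of Remarks~\ref{rem} and~\ref{def1}), and then gluing these blocks into a single compact subset of $[0,1]$.

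Construction. Fix $\delta$ just above $80/89$, as required by Lemma~\ref{delta}. For each $A$ and each $n$, derive a sequence $\alpha^{(n)} \in \{0,1\}^{\mathbb{N}}$ from $\chi_{A_n}$, with the direction of the encoding chosen so that the dichotomy ``$A_n$ (not) cofinite'' corresponds to the dichotomy on which Lemma~\ref{delta}'s hypothesis turns. Partition $(0,1]$ into pairwise disjoint slots $I_n$ of diameter $O(10^{-n})$ accumulating at $0$, and embed inside $I_n$ an affinely scaled and translated copy of $g_{\alpha^{(n)}}^\delta[D \cap [0, 10^{-n}]]$. Let $\varphi(A)$ be the union of these embedded copies together with $\{0\}$; this is compact. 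Continuity of $\varphi$ is immediate from the geometric decay of slot diameters: if $A$ and $A'$ agree on $\{1,\ldots,N\}\times\{1,\ldots,N\}$, then $\varphi(A)$ and $\varphi(A')$ coincide on $I_1\cup\cdots\cup I_N$, and $d_H(\varphi(A),\varphi(A'))$ is bounded by the diameter of $\bigcup_{n>N} I_n$, which tends to $0$.

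Easy direction ($A\notin B \Rightarrow \varphi(A)\in$ wIFS$^1$). When only finitely many sections are ``bad'', only finitely many slots carry Lemma-\ref{delta}-obstructing blocks; the cofinite tail of slots exhibits uniform self-similarity. I would exhibit an explicit wIFS for $\varphi(A)$ consisting of one weak contraction that sends the tail $\{0\}\cup \bigcup_{n\ge n_0}(\varphi(A)\cap I_n)$ one slot further inward, together with finitely many similarity-type weak contractions that individually cover the exceptional head slots.

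Hard direction ($A\in B \Rightarrow \varphi(A)\notin$ wIFS$^1$). Assume toward contradiction that $\varphi(A) = \bigcup_{i=1}^k s_i[\varphi(A)]$ for weak contractions $s_1,\ldots,s_k$ on $[0,1]$. Since $A\in B$, infinitely many slots $I_n$ carry blocks satisfying the hypothesis of Lemma~\ref{delta}. Picking such an $n$ with ``eventually $1$'' threshold placed far enough relative to $k$, conjugating each $s_i$ by the affine rescaling of $I_n$ onto $[0,10^{-n}]$ (which preserves being a weak contraction), and applying Lemma~\ref{delta} iteratively to the successive four-way splittings $E^{\cdot}_j$ within that slot, I obtain $k\ge 2^j$ for every depth $j$, contradicting $k<\infty$. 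The main technical obstacle is making this iterated counting watertight: one must verify that at each nested depth the subpiece continues to present a scaled decimal-Cantor structure with eventually-constant $\alpha$-tail so Lemma~\ref{delta} reapplies, rule out that weak contractions whose images straddle several slots can be repurposed to defeat the per-slot count, and calibrate the $A_n\mapsto\alpha^{(n)}$ encoding direction so that the easy direction genuinely admits an explicit wIFS.
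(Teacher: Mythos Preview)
Your construction is too spare to make either direction go through, and the paper's actual proof shows why: it needs each ``slot'' to contain not one $\delta$-distorted block but a growing \emph{row} of $\widetilde{k_{i_n}^n}=5n\cdot k_{i_n}^n$ blocks $C_n^1(A),\ldots,C_n^{\widetilde{k}}(A)$, all shaped by the \emph{same} $\alpha=\tau_n(A)$ but with increasing distortion exponents $\delta_n^{0},\delta_n^{1},\ldots$. This architecture is what drives both directions.

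\textbf{Easy direction.} Your proposed weak contraction ``shifts the tail one slot inward''. But the block in slot $I_n$ is shaped by $\alpha^{(n)}$ and the block in $I_{n+1}$ by $\alpha^{(n+1)}$; these are unrelated sequences (different rows of $A$), so there is no reason a single weak contraction maps one onto the other. In the paper, the crucial weak contraction $f$ instead maps $C_n^j(A)$ onto $C_n^{j+1}(A)$ \emph{within} the same section: same $\alpha$, exponent increased by one. The strictness $|f(x)-f(y)|<|x-y|$ is then verified using precisely the hypothesis that $\tau_n(A)$ has infinitely many zeros (i.e.\ $A_n$ not cofinite), by isolating one coordinate $m$ with $\tau_n(A)(m)=0$ in the sum $\sum_l(d_{x|l}-d_{y|l})\delta^{(j-1)(1-\tau_n(A)(l))}$.

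\textbf{Hard direction.} Your plan is to iterate Lemma~\ref{delta} inside one bad slot to force $k\ge 2^j$. But Lemma~\ref{delta} bounds how $f[C]$ meets the four pieces of $g_\alpha^\delta[C]$ when $C$ is the \emph{undistorted} Cantor piece; in your construction the attractor inside $I_n$ is $g_\alpha^\delta[C]$, not $C$, so you would be applying $s_i$ to the wrong set. More seriously, even granting the per-slot bound, nothing stops $s_i$ from mapping \emph{other} slots into $I_n$ and covering what $s_i[B_n]$ misses; you flag this but your construction has no mechanism to control it. The paper's mechanism is the counting forced by the long rows: since the $n$th section has $5n\cdot k_{i_n}^n$ members while $L_n$ (everything to the left) consists of only $k_{i_n}^n$ comparably-sized Cantor pieces, $\bigcup_i f_i[L_n]$ can touch at most a fifth of the members; $\bigcup_i f_i[R_n]$ touches at most $k$; and Lemma~\ref{delta} together with conditions (3),(4) shows $\bigcup_i f_i[K_n]$ can cover at most the right half of the members. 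For $n$ large these three contributions fall short of $\widetilde{k_{i_n}^n}$.
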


\begin{proof}
The ``In particular" part follows from Lemma \ref{fsigmadelta-complete}.

In this proof for $A\subseteq\mathbb{R}$ and $r\in\mathbb{R}$ we will write $rA=\{r\cdot a: a\in A\}$ and $A+r=\{a+r:a\in A\}$.
	
	We will start by defining a sequence $(a_n)_n$ which we will need later. Put $a_0=1.9$. Let $C_0$ be the decimal Cantor set (see Remark \ref{rem}).
	
	Put $a_{1}'=\frac{a_0}{2}$ and $\varepsilon_{1} = \frac{1}{5} \cdot a_{1}' $. 
	
	Let us introduce a notation such that our decimal Cantor set
	$$C_0 = \bigcap_i\bigcup_{j \leqslant k_i^1 } I_i^j, $$
	where $I_i^j$ are the intervals from the construction of decimal Cantor set: $I_i^1=[0,1/10^{i-1}]$ and $ |I_i^{j_1}| = |I_i^{j_2}| $ for all $i\in\mathbb{N}$ and $j_1,j_2\leq k_i^1$ (in particular, $k_i^1=2^{i-1}$).
	
	Since $C_0$ is a measure zero set, there exists $i_1$ such that 
	$$ 1.9\cdot\sum_{ j\leqslant k_{i_1}^1 } |I_{i_1}^j| = |I_{i_1}^1| \cdot k_{i_1}^1 \cdot 1.9 < \varepsilon_1. $$
	
	To shorten the notation we will write $ \widetilde{k_{i_1}^1} = k_{i_1}^1\cdot 5$.
	
	Pick $\delta_1\in(0,1)$ such that $\delta_{1}^{\widetilde{k^{1}_{i_{1}}}-1}>\frac{190}{199}$.
	
	Denote $D_1 = C_0\cap I_{i_1}^1, C_1^1 = D_1 + a_0$ and
$$ C_1^k = \delta_1^{k-1}\cdot D_1+ \left( a_0 + | I_{i_1}^1 |\cdot 1.9 \cdot ( 1 + \delta_1 + \delta_1^2 + \dots + \delta_1^{k-2} ) \right) $$
for $k \in \{2,\dots, \widetilde{k_{i_1}^1} \} $.

Put $a_1 = | I_{i_1}^1 | \cdot 1.9 \cdot \left( 1 + \delta_1 + \dots + \delta_1^{ \widetilde{k_{i_1}^1} -1 } \right) $. Therefore we obtain
$$ a_1 < | I_{i_1}^1 | \cdot 1.9 \cdot \widetilde{k_{i_1}^1} = | I_{i_1}^1 | \cdot 1.9 \cdot k_{i_1}^1\cdot 5 < 5 \varepsilon_1 = a_1' = \frac{1}{2} a_0 .$$
Hence, $a_1 \leqslant \frac{1}{2}a_0.$

Let us assume that $a_n$ is already defined. Put $a_{n+1}'=\frac{a_n}{2}$ and $\varepsilon_{n+1} = \frac{1}{5(n+1)} \cdot a_{n+1}' $.

Consider the previously defined sets $C_l^j$ for $l\leq n$ and $j\leq \widetilde{k_{i_l}^l}$. For each such set let us divide its length by $ \delta_l^{j-1} $ and shift it so that its middle coincides with the middle of $C_l^j$. Let us denote the rescaled and shifted set $E_l^j$. Notice that each $E_1^j$ is a translation of the set $D_1=C_0\cap I_{i_1}^1$ (in particular, the diameter of each $E_1^j$ is $\frac{1}{10^{i_1-1}}$). From the next paragraphs of the construction it will follow that in fact each $E_l^j$ is a translation of $C_0\cap I_{i}^1$ for some $i\in\mathbb{N}$.

Put
$$ C_n = C_0 \cup \bigcup_{ k\leqslant \widetilde{k_{i_1}^1} } E_1^k \cup \dots \cup \bigcup_{ k\leqslant \widetilde{k_{i_n}^n} } E_n^k = C_{n-1} \cup \bigcup_{ k\leqslant \widetilde{k_{i_n}^n} } E_n^k .$$

Let $ \left( I_i^j \right) $ be such a sequence of intervals that
$$ C_n = \bigcup_i \bigcap_{j\leqslant k_i^{n+1}} I_i^j, $$
$| I_i^j |=| I_i^{j'} |$ and $I_i^1=[0,1/10^{i-1}]$ for all $i\in\mathbb{N}$ and $j_1,j_2\leq k_i^1$.

Since $C_n$ is a finite union of subsets of the decimal Cantor set, it is of measure zero. Therefore there exists $i_{n+1}$ such that
$$ 1.9\cdot\sum_{j\leqslant k_{i_{n+1}}^{n+1} } |I_{i_{n+1}}^j| = |I_{i_{n+1}}^1|\cdot k_{i_{n+1}}^{n+1} \cdot 1.9 < \varepsilon_{n+1}. $$
Put $ \widetilde{k_{i_{n+1}}^{n+1}} = k_{i_{n+1}}^{n+1} \cdot 5 \cdot (n+1)$ and $ D_{n+1} = C_0 \cap I_{i_{n+1}}^1 $. Pick $\delta_{n+1}\in(0,1)$ such that $\delta_{n+1}^{\widetilde{k^{n+1}_{i_{n+1}}}-1}>\frac{190}{199}$.

Let us put
$$ C_{n+1}^1 = D_{n+1} + (a_0+\dots+a_n) ,$$
$$ C_{n+1}^k = \delta_{n+1}^{k-1} \cdot D_{n+1} + \left( a_0+\dots+a_n + | I_{i_{n+1}}^1 | \cdot 1.9 \cdot ( 1+\delta_{n+1} + \delta_{n+1}^2 + \dots + \delta_{n+1}^{k-2} ) \right) $$
for $k\in \left\{ 2, \dots, \widetilde{k_{i_{n+1}}^{n+1}} \right\} $. Observe that sets $C_{l}^j$ defined in this way guarantee that in the next inductive step each $E_{n+1}^j$ (defined similarly as in the paragraph before the definition of $C_n$) is a translation of $C_0\cap I_{i_{n+1}}^1$. Hence, $C_{n+1}$ will have measure zero.

Finally, put
$$ a_{n+1} = | I_{i_{n+1}}^{1} | \cdot 1.9 \cdot \left( 1 + \delta_{n+1} + \cdots + \delta_{n+1}^{\widetilde{ k_{i_{n+1}}^{n+1} } - 1} \right) .$$

Similarly as before, it turns out that $a_{n+1} \leqslant \frac{a_n}{2} $, which concludes the inductive construction.

Since $a_{n+1} \leqslant \frac{a_n}{2} $ for all $n$, the series $\sum_{i=0}^\infty a_n\leq a_0\cdot \sum_{i=0}^\infty\frac{1}{2^i}<+\infty$, which implies that there exists $x$ such that
$$ \overline{C_0 \cup \bigcup_{n} \bigcup_{ k\leqslant \widetilde{k_{i_n}^{n}} } C_n^k } = C_0 \cup \bigcup_{n} \bigcup_{ k\leqslant \widetilde{k_{i_n}^{n}} } C_n^k \cup \{ x \} .$$
where the upper dash denotes the closure operator.

Let us define $\psi \colon \mathbb{R}\to\mathbb{R}$ as $\psi(y) = \frac{y}{x}$, i.e. it is the scaling of the entire construction into the interval $[0,1]$.

We are ready to introduce the required map $\varphi \colon \mathcal{P}(\mathbb{N}^2) \to K([0,1]). $ First, let us define it on extreme arguments.
$$ \varphi(\emptyset) = \psi\left( C_0 \cup \bigcup_{n} \bigcup_{ k\leqslant \widetilde{k_{i_n}^{n}} } C_n^k \cup \{ x \} \right) ,$$
$$ \varphi(\mathbb{N}^2) = \psi\left( C_0 \cup \bigcup_{n} \bigcup_{ k\leqslant \widetilde{k_{i_n}^{n}} } E_n^k \cup \{ x \} \right) .$$
Now let us define $\varphi$ for any set $A\subseteq \mathbb{N}^2$. We will use auxiliary functions $\tau_n \colon \mathcal{P}(\mathbb{N}^2) \to \{0,1\}^\mathbb{N} $ such that $\tau_n (A) = (j_1,j_2,\dots) $ where $ j_i = 1 $ if $(n,i)\in A$ and $j_i=0$ otherwise. Let us go back to Remark \ref{rem}: there exists a set $\{ d_s\colon s\in \{0,1\}^{<\mathbb{N}} \}$ such that $D_n = \left\{ \sum_{i} d_{x|i} \colon x \in \{0,1\}^\mathbb{N},x_i=0\text{ for }i\leq i_n-1 \right\} $. Then define
$$ C_n^j(A) = \left\{ \sum_l d_{x|l} \cdot \delta^{ (j-1)( 1-\tau_n(A)(l) ) } \colon x \in \{0,1\}^\mathbb{N},x_i=0\text{ for }i\leq i_n-1 \right\}, $$
where $\tau_n(A)(l)$ denotes the $l$th term of $\tau_n(A)$, and finally shift $C_n^j(A)$ so that its midpoint coincides with the midpoint of $C_n^j$. Equivalently, using the notation from Remark \ref{def1}, $ C_n^j(A)=g^{\delta^{ (j-1)}}_{\tau_n(A)}(C_0\cap I^1_{i_n})$. Having all of those, put
$$ \varphi(A) = \psi \left( C_0 \cup \bigcup_{n} \bigcup_{ k\leqslant \widetilde{k_{i_n}^{n}} } C_n^k (A) \cup \{ x \} \right) .$$
Notice that this definition is consistent with the previously defined values at $\emptyset$ and $\mathbb{N}^2$.

Observe that the set $C_n^j(A)$ is an attractor of some IFS, for each $A$, $n$ and $j$. Indeed, without loss of generality we can assume that $C_n^j(A)\subseteq[0,1]$ (since a shift of an IFS attractor is still an IFS attractor). Given $x \in \{0,1\}^\mathbb{N}$ such that $x_i=0$ for $i\leq i_n-1$ let $\hat{x}^0,\hat{x}^1\in \{0,1\}^\mathbb{N}$ be given by:
 \begin{itemize}
\item $\hat{x}^0_i=\hat{x}^1_i=0$ for $i\leq i_n-1$,
\item $\hat{x}^0_{i_n}=0$,
\item $\hat{x}^1_{i_n}=1$,
\item $\hat{x}^0_{i}=\hat{x}^1_{i}=x_{i-1}$ for $i>i_n$.
\end{itemize} 
Let also $f_1,f_2:[0,1]\to[0,1]$ be functions such that:
\begin{itemize}
\item $f_1(\sum_l d_{x|l} \cdot \delta^{ (j-1)( 1-\tau_n(A)(l) ) })=\sum_l d_{\hat{x}^0|l} \cdot \delta^{ (j-1)( 1-\tau_n(A)(l) ) }$ for all $x \in \{0,1\}^\mathbb{N}$ such that $x_i=0$ for $i\leq i_n-1$,
\item $f_2(\sum_l d_{x|l} \cdot \delta^{ (j-1)( 1-\tau_n(A)(l) ) })=\sum_l d_{\hat{x}^1|l} \cdot \delta^{ (j-1)( 1-\tau_n(A)(l) ) }$ for all $x \in \{0,1\}^\mathbb{N}$ such that $x_i=0$ for $i\leq i_n-1$,
\item $f_1[[0,\min C_n^j(A)]]=f_1(\min C_n^j(A))$, 
\item $f_2[[0,\min C_n^j(A)]]=f_2(\min C_n^j(A))$,
\item $f_1[[\max C_n^j(A),1]]=f_1(\max C_n^j(A))$, 
\item $f_2[[\max C_n^j(A),1]]=f_1(\max C_n^j(A))$,
\item both $f_1$ and $f_2$ are linear on every open interval disjoint with $C_n^j(A)\cup[0,\min C_n^j(A)]\cup[\max C_n^j(A),1]$.
\end{itemize} 
From the above conditions it is obvious that $C_n^j(A)=f_1[C_n^j(A)]\cup f_2[C_n^j(A)]$. Observe also that both $f_1$ and $f_2$ are non-decreasing. We will show that both $f_1$ and $f_2$ are contractions. Note that actually from the last five items above it suffices to show that if $v,w\in C_n^j(A)$ then $|f_1(v)-f_1(w)|\leq\frac{1}{5}|v-w|$ and $|f_2(v)-f_2(w)|\leq\frac{1}{5}|v-w|$. Fix any $x,y\in \{0,1\}^\mathbb{N}$, $x\neq y$, such that $x_i=y_i=0$ for $i\leq i_n-1$. Without loss of generality we may assume that $y<_{lex} x$. Then also $\hat{y}^0|l\leq_{lex} \hat{x}^0|l$ and $\hat{y}^1|l\leq_{lex} \hat{x}^1|l$ for every $l$. Recall that for each $z\in \{0,1\}^\mathbb{N}$ we have $\sum_{n} d_{z|n} = \sum_{k, z_k = 1} \frac{9}{10^k} $ and  if $s,t\in\{0,1\}^{<\mathbb{N}}$, $s <_{lex} t, lh(s)=lh(t)$ then $d_s<d_t$ (see Remark \ref{rem}). Thus we have $f_1(x)>f_1(y)$ and:
$$|f_1(x)-f_1(y)|=f_1(x)-f_1(y)=\sum_l \left(d_{\hat{x}^0|l}-d_{\hat{y}^0|l}\right) \cdot \delta^{ (j-1)( 1-\tau_n(A)(l) ) }\leq$$
$$ \sum_l \left(d_{\hat{x}^0|l}-d_{\hat{y}^0|l}\right)=\sum_{k, \hat{x}^0_k = 1} \frac{9}{10^k}-\sum_{k, \hat{y}^0_k = 1} \frac{9}{10^k}=\frac{1}{10}\left(\sum_{k, x_k = 1} \frac{9}{10^k}-\sum_{k, y_k = 1} \frac{9}{10^k}\right)\leq$$
$$\frac{1}{5}\delta^{ j-1 }\left(\sum_{k, x_k = 1} \frac{9}{10^k}-\sum_{k, y_k = 1} \frac{9}{10^k}\right)=\frac{1}{5}\delta^{ j-1}\left(\sum_{l} d_{x|l}-\sum_{l} d_{y|l}\right)\leq$$
$$\frac{1}{5}\left(\sum_l d_{x|l} \cdot \delta^{ (j-1)( 1-\tau_n(A)(l) ) }-\sum_l d_{y|l} \cdot \delta^{ (j-1)( 1-\tau_n(A)(l) ) }\right).$$
This shows that $f_1$ is a contraction. For $f_2$ the reasoning is similar.

In the following for a fixed $A\subseteq\mathbb{N}^2$ we will refer to $\bigcup_{ k\leqslant \widetilde{k_{i_n}^{n}} } C_n^k (A)$ or to $\psi\left(\bigcup_{ k\leqslant \widetilde{k_{i_n}^{n}} } C_n^k (A)\right)$ as the $n$th section and to single $C_n^k (A)$ or $\psi(C_n^k (A))$ as member of the $n$th section.

Before presenting the remaining part of the proof, we would like to examine some conditions imposed on $\delta_n$ which follow from our construction.

\begin{itemize}
	\item[(1)] $$ \delta_{n}^{ \widetilde{k_{i_{n}}^{n}}-1} > \frac{80}{89}$$ (so we can apply Lemma \ref{delta}),
	\item[(2)] $$ |I_{i_{n}}^1| \cdot \delta_{n}^{ \widetilde{k_{i_{n}}^{n}} - 2 } \cdot 0.9 > | I_{i_{n}}^1 | - | I_{i_{n}}^1 | \cdot \delta_{n}^{ \widetilde{k_{i_{n}}^{n}} - 1 } .$$
	\item[(2')] $$ |I_{i_{n}}^1| \cdot \delta_{n}^{ \widetilde{k_{i_{n}}^{n}}-1 } \cdot 0.9 > \frac{1}{2} \cdot \left( | I_{i_{n}}^1 | - | I_{i_{n}}^1 | \cdot \delta_{n}^{ \widetilde{k_{i_{n}}^{n}} - 1 } \right). $$
\end{itemize}

The meaning behind condition (2) is the following: members of the $n$th section will not overlap even in the case of a maximal expansion. Indeed, the gap between the last two members of a section (which is always the smallest gap in a section), if they are maximally contracted, is of the same size as the left-hand side of the inequality, while the right-hand side of the inequality is an upper bound for the difference between lengths of that gap in cases of maximal contraction and maximal expansion:
$$ \frac{1}{2} \cdot \left( |I_{i_{n}}^1| - |I_{i_{n}}^1| \cdot \delta_{n}^{ \widetilde{k_{i_{n}}^{n}} - 2 } \right) + \frac{1}{2} \cdot \left( |I_{i_{n}}^1| - |I_{i_{n}}^1| \cdot \delta_{n}^{ \widetilde{k_{i_{n}}^{n}} - 1 } \right) < |I_{i_{n}}^1| - |I_{i_{n}}^1| \cdot \delta_{n}^{ \widetilde{k_{i_{n}}^{n}} - 1 } .$$
Similarly, condition (2') is introduced in order to assure that sections do not overlap. Let us notice that condition (1) implies (2) and (2'), i.e.:
$$ (1) \implies \delta_{n}^{ \widetilde{k_{i_{n}}^{n}} -1 } > \frac{10}{19} \implies (2). $$
$$ (1) \implies \delta_{n}^{ \widetilde{k_{i_{n}}^{n}} -1 } > \frac{10}{29} \implies (2'). $$

\begin{itemize}
	\item[(3)] $$ |I_{i_{n}}^1| \cdot \delta_{n}^{ \widetilde{k_{i_{n}}^{n}}-2 } \cdot 0.9 - \left( | I_{i_{n}}^1 | - | I_{i_{n}}^1 | \cdot \delta_{n}^{ \widetilde{k_{i_{n}}^{n}} - 1 } \right) > \frac{8}{10} \cdot |I_{i_{n}}^1|. $$ 
	\item[(3')] $$ |I_{i_{n}}^1| \cdot \delta_{n}^{ \widetilde{k_{i_{n}}^{n}}-1 } \cdot 0.9 - \frac{1}{2} \cdot \left( | I_{i_{n}}^1 | - | I_{i_{n}}^1 | \cdot \delta_{n}^{ \widetilde{k_{i_{n}}^{n}} - 1 } \right) > \frac{8}{10} \cdot |I_{i_{n}}^1|. $$
\end{itemize}
Similarly as before, the left-hand side of (3') is the length of a gap behind a section in the case of maximal expansion, while the right-hand side is the gap in the first Cantor section from the left (i.e. the longest one). From this condition the image under any weak contraction of the longest member of the $n$th section will not overlap with two sections on the right at once (and therefore neither will any other member of the $n$th section). In (3) the left-hand side is a lower bound for the gap between members of the $n$th section. Thus, (3) assures us that the image under any weak contraction of any member of the $n$th section will not overlap with two members of this section. Let us notice the following:
$$ (3') \iff \delta_{n}^{ \widetilde{k_{i_{n}}^{n}} - 1 } > \frac{13}{14}, $$
$$ \delta_{n}^{ \widetilde{k_{i_{n}}^{n}} - 1 } > \frac{18}{19} \implies (3) . $$

\begin{itemize}
	\item[(4)] \begin{align*}
	  |I_{i_{n}}^1|\cdot 0.9  < |I_{i_{n}}^1| \cdot \delta_{n}^{ \widetilde{k_{i_{n}}^{n}}-2 } \cdot 0.9 & - \left(|I_{i_{n}}^1| - |I_{i_{n}}^1| \cdot \delta_{n}^{ \widetilde{k_{i_{n}}^{n}} - 1 }\right)	+ \\ 
	  & + |I_{i_{n}}^1| \cdot 0.09 \cdot \delta_{n}^{ \widetilde{k_{i_{n}}^{n}} - 1 } . 
	  \end{align*}
\end{itemize}
Now the left-hand side is the maximal distance between two members of the $n$th section. The right-hand side is a bound for the minimal distance between two members of the $n$th section plus the $|I_{i_{n}}^1| \cdot \delta_{n}^{ \widetilde{k_{i_{n}}^{n}}-1 } \cdot 0.09$ factor. Following condition (1) and Lemma \ref{delta}, the image of any member $C_n^j(A)$ of the $n$th section under a weak contraction can intersect at most two consecutive sets of the form $E^{i_n}_i(C_n^{j'}(A))$ for $i\leq 4$ (see Remark \ref{def1} for the definition of those sets). The minimal distance between $\min C_n^{j'}(A)$ and $E^{i_n}_2(C_n^{j'}(A))$ (as well as between $\max C_n^{j'}(A)$ and $E^{i_n}_3(C_n^{j'}(A))$) is at least $|I_{i_{n}}^1| \cdot \delta_{n}^{ \widetilde{k_{i_{n}}^{n}}-1 } \cdot 0.09$. Therefore, condition (4) (together with (3)) guarantees that for any weak contraction $f$ such that $f[\varphi(A)]\subseteq\varphi(A)$ and $f[C_n^j(A)]\subseteq C_n^{j'}(A)$, either $f[C_n^j(A)]$ is a subset of the left-hand half of $C_n^{j'}(A)$ or $f[C_n^{j-1}(A)]\cup f[C_n^{j+1}(A)]\subseteq C_n^{j'}(A)\cup C_n^{j'+1}(A)$.
Similarly, either $f[C_n^j(A)]$ is a subset of the right-hand half of $C_n^{j'}(A)$ or $f[C_n^{j-1}(A)]\cup f[C_n^{j+1}(A)]\subseteq C_n^{j'}(A)\cup C_n^{j'-1}(A)$. Let us notice the following.
$$ \delta_{n}^{ \widetilde{k_{i_{n}}^{n}} - 1 } > \frac{190}{199} \implies (4). $$

\begin{itemize}
	\item[(4')] $$ |I_{i_{n}}^1|\cdot 0.9 < |I_{i_{n}}^1| \cdot \delta_{n}^{ \widetilde{k_{i_{n}}^{n}}-1 } \cdot 0.9 - \frac{1}{2} \cdot \left( | I_{i_{n}}^1 | - | I_{i_{n}}^1 | \cdot \delta_{n}^{ \widetilde{k_{i_{n}}^{n}} - 1 } \right) + |I_{i_{n}}^1| \cdot \delta_{n}^{ \widetilde{k_{i_{n}}^{n}}-1 } \cdot 0.09. $$
\end{itemize}

Condition (4') is analogous to (4), but instead of considering the minimal distance between two members of the $n$th section we take the minimal gap behind the $n$th section. After computations it turns out that 
$$ (4') \iff \delta_{n}^{ \widetilde{k_{i_{n}}^{1}}-1 } > \frac{140}{149}. $$

Now we continue our proof. We will show that $\varphi$ is continuous. Fix any $A\subseteq\mathbb{N}^2$ and $r\in(0,1)$. Note that $\{B\subseteq\mathbb{N}^2:A\cap F=B\cap F\}$ is open in $\mathcal{P}(\mathbb{N}^2)$, for every finite set $F\subseteq\mathbb{N}^2$. Thus, we wish to find a finite set $F\subseteq\mathbb{N}^2$ such that $B\cap F=A\cap F$ implies $\varphi(B)\in B(\varphi(A),r)$, where $B(\varphi(A),r)$ denotes the ball in $K([0,1])$ of radius $r$ and centered at $\varphi(A)$. Since $\lim_{n\to\infty} \sum_{i>n} a_i = 0 $, there is $m\in\mathbb{N}$ such that $\varphi(B)\in B(\varphi(A),r)$ whenever $A\cap(\{1,\ldots,m\}\times\mathbb{N})=B\cap(\{1,\ldots,m\}\times\mathbb{N})$. Also, for any $x \in \{0,1\}^\mathbb{N}$ the series $\sum_{n} d_{x|n} $ is convergent. Therefore for any $n<m$ only the initial interval of $\tau_n(A)$ matters. This shows that $\varphi$ is continuous. 

If $A\subseteq \mathbb{N}^2$ is such that 
$$ \forall_n^\infty \: \exists_m^\infty\: (n,m) \notin A, \qquad (*) $$
then we would like to obtain that $\varphi(A) \in wIFS^1$. The sentence $(*)$ means that $\exists_{n_0}\:\forall_{n>n_0}\:\exists_m^\infty\:(n,m)\notin A$. Let us denote the union of first $n_0$ sections and $\psi(C_0)$ by $B$, i.e.:
$$B = \psi \left( C_0 \cup \bigcup_{n \leqslant n_0 } \bigcup_{ k\leqslant \widetilde{k_{i_n}^{n}} } C_n^k (A) \right).$$
The set $B$ is a union of $\psi(C_0)$ (which is an attractor for the contractions $f_1(x)=\psi(\frac{x}{10})$ and $f_2(x)=\psi(\frac{x}{10}+\frac{9}{10})$) and finitely many pairwise disjoint sets of the form $\psi(C_n^k (A))$. Since each $C_n^k (A)$ is an attractor for some IFS, so is each $\psi(C_n^k (A))$. We can additionally require that each of those contractions sends $[0,\min \psi(C_n^k (A)))$ on the same point as $\min \psi(C_n^k (A))$ and $(\max \psi(C_n^k (A)),1]$ on the same point as $\max \psi(C_n^k (A))$. Therefore there exists $k$ and an iterated function system $\{ f_1, f_2, \dots, f_k \}$ (consisting of regular contractions), such that $B = \bigcup_{i\leqslant k} f_i[B]$. Without the loss of generality $\bigcup_{i\leqslant k} f_i[\varphi(A)]\subseteq B$ (it suffices to require that $f_i[\varphi(A)\setminus B]=f_i(\max B)$ for all $i\leq k$).

Let $f$ be such that the following conditions hold:
 \begin{itemize}
\item[1.] $f(1) = \psi(x) = 1, $
\item[2.] $\forall_{n\geqslant n_0} \: f \left[ C_{n}^{\widetilde{k_{i_n}^n}} \right] = C_{n+1}^1 $,
\item[3.] $\forall_{n > n_0} \: \forall_{j < \widetilde{k_{i_n}^n} } f \left[ C_{n}^j \right] = C_{n}^{j+1}$,
\item[4.] $ f\left[ 0, \min \psi \left( C_{n_0}^{k_{i_{n_0}}^{n_0}} \right) \right] = \min \psi \left( C_{n_0}^{k_{i_{n_0}}^{n_0}} \right) $,
\item[5.] $f$ is a weak contraction.
\end{itemize} 
Then we would have $ \varphi (A) = f\left[ \varphi(A) \right] \cup \bigcup_{i\leqslant k} f_i\left[ \varphi(A) \right]$. Why is it possible to pick such $f$? In fact only item 3. is problematic as it could contradict item 5. Now we will show that this is not a problem.

Let $n>n_0$ and $j< \widetilde{k_{i_n}^n}$. Also, fix $x,y \in \{0,1\}^\mathbb{N}, x >_{lex} y$. There exists $k$ such that $ x|k \neq y|k $. Additionally, there exists $m>k$ such that $(n,m) \notin A$ (since $n>n_0$). Therefore $\tau_n(A)(m) = 0.$ Then
$$ \sum_{l} d_{x|l} \cdot \delta^{ (j-1)\cdot (1 - \tau_n(A)(l) ) } - \sum_{l} d_{y|l} \cdot \delta^{ (j-1)\cdot (1 - \tau_n(A)(l) ) } = $$
$$ =  \sum_{l} \left( d_{x|l}-d_{y|l} \right) \cdot \delta^{ (j-1)\cdot (1 - \tau_n(A)(l) ) } = $$
$$ =  \sum_{l\neq m} \left( d_{x|l}-d_{y|l} \right) \cdot \delta^{ (j-1)\cdot (1 - \tau_n(A)(l) ) } + \left( d_{x|m} - d_{y|m} \right) \cdot \delta^{(j-1)} \geqslant  $$
$$ \geqslant  \sum_{l \neq m} \left( d_{x|l}-d_{y|l} \right) \cdot \delta^{ j\cdot (1 - \tau_n(A)(l) ) } + \left( d_{x|m} - d_{y|m} \right) \cdot \delta^{(j-1)} >  $$
$$ >  \sum_{l \neq m} \left( d_{x|l}-d_{y|l} \right) \cdot \delta^{ j\cdot (1 - \tau_n(A)(l) ) } + \left( d_{x|m} - d_{y|m} \right) \cdot \delta^{j}. $$
(the last inequality is true as $d_{x|m} - d_{y|m} > 0$ by $x >_{lex} y $ -- see Remark \ref{rem}). This part is concluded.

The following is left to be shown:
$$ \forall_{A\subseteq\mathbb{N}^2}\:\exists_n^\infty\:\forall_m^\infty\:\: (n,m)\in A \implies\varphi(A)\notin wIFS^1. $$

Let us fix $A\subseteq\mathbb{N}^2$ such that $\exists_n^\infty\:\forall_m^\infty\:\: (n,m)\in A$ and assume otherwise: that $\varphi(A)$ is an attractor for a system of $k$ weak contractions $f_1, f_2, \dots, f_k$. Let us define the following sequences of sets:
$$ K_n = \psi \left( \bigcup_{l\leqslant \widetilde{ k_{i_n}^n } } C_n^l(A) \right),$$
$$ L_n = \varphi(A) \cap [ 0, \min K_n ) ,$$
$$ R_n = \varphi(A) \cap ( \max K_n, 1 ] ,$$
(so $K_n$ is the $n$th section).

\textit{Observation 1.} For any $j>k$ the set $\bigcup_{ i\leqslant k } f_i[ L_j ] $ intersects at most a fifth of all $ \psi \left( C_j^l(A) \right) $. It follows from the fact that 
$$ \widetilde{k_{i_j}^j} = k_{i_j}^j \cdot 5 \cdot j > k_{i_j}^j \cdot 5 \cdot k $$
and $L_j$ consists of $k_{i_j}^j$ Cantor sets longer than $\psi\left( C_j^l(A) \right)$ (the same amount for every $l\leqslant \widetilde{k_{i_j}^j} $) each of which cannot overlap two members of $j$th section (by (3')).

\textit{Observation 2.} If $ f_i\left[ R_j \right] \cap K_j \neq \emptyset $ for some $i\leq k$ and $j\in\mathbb{N}$, then the whole set $ f_i\left[ R_j \right]$ is a subset of one member of $j$th section. This follows from the fact that each $R_j$ has gaps between its members which are smaller than the smallest gap between members of $j$th section.

\textit{Observation 3.} If for some $i\leq k$ there is $j\in\mathbb{N}$ such that $ f_i\left[ K_j \right] \subseteq K_j \cup L_j$, then $ f_i\left[ K_j \cup R_j \right] \subseteq K_j \cup L_j$. 

By Observation 3., it is impossible that for every $i\leqslant k$ there exists $j_i$ such that $ f_i\left[ K_{j_i} \right] \subseteq K_{j_i} \cup L_{j_i}$. Indeed, otherwise for $n > k,j_1, \dots, j_k$ the set $K_n$ would not be entirely covered (by Observation 1.), since in this case
$$K_n\cap\bigcup_{i\leq k}f_i[\varphi(A)]=K_n\cap\bigcup_{i\leq k}f_i[L_n].$$

Let us pick $n>k$ such that $\forall_m^\infty \: (n,m) \in A $ and for every $i \leqslant k$ if there exists such $j_i$ that $f_i \left[ K_{j_i} \right] \subseteq K_{j_i} \cup L_{j_i} $, then $n>j_i$. If no such $j_i$ exists, then instead pick any $n>k$ such that simply $\forall_m^\infty \: (n,m) \in A $. We also require for $n$ to be sufficiently large that 
$$ \widetilde{k_{i_{n}}^{n}} > \lceil \frac{1}{2} \widetilde{k_{i_{n}}^{n}}   \rceil + \lceil \frac{1}{5} \widetilde{k_{i_{n}}^{n}}   \rceil + k.$$
Since $\widetilde{k} \to \infty$, finding such $n$ is possible.

We will show that $K_n$ cannot be covered by $\bigcup_{i\leq k }f_i[\varphi(A)]$. Let us notice that the set $\bigcup_{i \leqslant k }f_i[L_n] $ intersects at most a fifth of the members of $K_n$ (Observation 1.). Since for each $i$ the set $f_i[R_n]$ intersects at most one member of $K_n$  (Observation 2.), the union $\bigcup_{i \leqslant k }f_i[R_n] $ covers at most $k$ of them. Thus, there are still 
$$ \widetilde{k_{i_{n}}^{n}} - \lceil \frac{1}{5} \widetilde{k_{i_{n}}^{n}}   \rceil - k>\lceil \frac{1}{2} \widetilde{k_{i_{n}}^{n}}   \rceil$$
members of $K_n$ to be covered by $\bigcup_{i\leq k}f_i[K_n]$.

Fix $i\leqslant k$. By the choice of $n$, either $f_i[K_n]\subseteq L_n$ (if there is $j_i$ as above) or $f_i[K_n]\cap R_n\neq\emptyset$ (see Observation 3.). In the latter case, let $\psi(C_n^j)$ be the member of the $n$th section which goes onto $R_n$ (i.e., $f_i[\psi[C_n^j]]\subseteq R_n$ -- note that $f_i[\psi[C_n^j]]$ cannot intersect both $R_n$ and $K_n\cup L_n$ by (3')). Suppose that $j-2\geq 1$. Then $\psi(C_n^{j-1})$ either goes onto $R_n$ as well or onto a subset of $\psi(C_n^{\widetilde{k^n_{i_n}}})$ (recall that $f_i[\psi(C_n^{j-1})]$ cannot intersect two members of the $n$th section by condition (3)). As we want to cover $K_n$, we can assume that the latter case holds. By conditions (1) and (4) imposed on $\delta$ and Lemma \ref{delta}, $\psi(C_n^{j-2})$ also goes onto a subset of $\psi(C_n^{\widetilde{k^n_{i_n}}})$. The same reasoning works for $\psi(C_n^{j+1})$ and $\psi(C_n^{j+2})$ instead of $\psi(C_n^{j-1})$ and $\psi(C_n^{j-2})$ (whenever $j+2\leq \widetilde{k^n_{i_n}}$). If we will carry on this reasoning, we will see that $f_i[K_n]$ can cover at most a half of all members of $K_n$, however it will be necessary the \textit{right-hand half}. 

The previous paragraph shows that $\bigcup_{i\leq k}f_i[K_n]$ can cover at most a half of all members of $K_n$. Thus, by the choice of $n$, $K_n$ cannot be covered by $\bigcup_{i\leq k }f_i[\varphi(A)]$. This finishes the entire proof.
\end{proof}

We are ready to prove our main result.

\begin{theorem}
	The set wIFS$^d$ is $G_{\delta\sigma}$-hard in $K([0,1]^d)$, for every $d\in\mathbb{N}$.
\end{theorem}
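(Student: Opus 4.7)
The plan is to reduce the $d$-dimensional case to the one-dimensional hardness result of Lemma \ref{main_theorem}. Concretely, I construct a continuous map $F \colon K([0,1]) \to K([0,1]^d)$ with the property that $F^{-1}[\text{wIFS}^d] = \text{wIFS}^1$. Composing an arbitrary continuous reduction to $\text{wIFS}^1$ with $F$ then yields a continuous reduction to $\text{wIFS}^d$, so the $G_{\delta\sigma}$-hardness transfers immediately.

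The natural candidate is the isometric embedding $\iota \colon [0,1] \to [0,1]^d$ defined by $\iota(x) = (x,0,\ldots,0)$, lifted to the hyperspace by $F(A) = \iota[A]$. Continuity of $F$ is immediate since $\iota$ is an isometry, so $d_H(F(A),F(B)) = d_H(A,B)$. For the inclusion $\text{wIFS}^1 \subseteq F^{-1}[\text{wIFS}^d]$, given weak contractions $s_1,\ldots,s_k$ on $[0,1]$ with attractor $A$, I define $\tilde s_i(x_1,\ldots,x_d) = (s_i(x_1),0,\ldots,0)$. A direct case-split according to whether $x_1 = x_1'$ or not verifies that each $\tilde s_i$ is a weak contraction on $[0,1]^d$, and then $F(A) = \bigcup_i \tilde s_i[F(A)]$ follows immediately from $A = \bigcup_i s_i[A]$.

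The reverse inclusion is the only step that needs genuine verification. Suppose $F(A)$ is the attractor of some wIFS $\{t_1,\ldots,t_m\}$ on $[0,1]^d$, and let $\pi_1$ denote projection onto the first coordinate. I set $u_i = \pi_1 \circ t_i \circ \iota$, so that each $u_i \colon [0,1] \to [0,1]$. For distinct $x,x' \in [0,1]$, injectivity of $\iota$ gives $\iota(x) \neq \iota(x')$; combining the strict inequality coming from $t_i$ being a weak contraction with the $1$-Lipschitzness of $\pi_1$ yields $|u_i(x) - u_i(x')| < |x - x'|$, so every $u_i$ is a weak contraction on $[0,1]$. Applying $\pi_1$ to both sides of the fixed-point identity $F(A) = \bigcup_i t_i[F(A)]$ and using $\pi_1[F(A)] = A$ gives $A = \bigcup_i u_i[A]$, hence $A \in \text{wIFS}^1$. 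The only mildly delicate point in the whole argument is preserving strictness after sandwiching the weak contraction $t_i$ between the two $1$-Lipschitz maps $\iota$ and $\pi_1$, which works precisely because $\iota$ is injective and therefore distinct inputs to $u_i$ produce distinct inputs to the weak contraction $t_i$.
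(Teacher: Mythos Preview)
Your proof is correct and follows essentially the same route as the paper: both embed via $\iota(x)=(x,0,\ldots,0)$, lift one-dimensional weak contractions to $[0,1]^d$ by $\tilde s_i(x_1,\ldots,x_d)=(s_i(x_1),0,\ldots,0)$, and recover one-dimensional weak contractions from $d$-dimensional ones by restricting/projecting. The only cosmetic difference is that you factor the argument through a general reduction $F\colon K([0,1])\to K([0,1]^d)$ with $F^{-1}[\text{wIFS}^d]=\text{wIFS}^1$, whereas the paper composes the embedding directly with the specific map $\varphi$ from Lemma~\ref{main_theorem}; your handling of the reverse inclusion via $u_i=\pi_1\circ t_i\circ\iota$ is in fact more explicit than the paper's one-line appeal to restriction.
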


\begin{proof}
There is a continuous function $\varphi\colon \mathcal{P}(\mathbb{N}^2) \to K([0,1])$ such that 
	$$ \varphi^{-1}[wIFS^1] = \mathcal{P}(\mathbb{N}^2) \setminus \{ A \colon \exists_n^\infty\: \forall_m^\infty\:\: (n,m) \in A \} $$
(by Lemma \ref{main_theorem}). 
	
	Define $\hat{\varphi}\colon \mathcal{P}(\mathbb{N}^2) \to K([0,1]^d)$ by $\hat{\varphi}(A)=(\varphi(A),0,\ldots,0)$ for all $A\in\mathcal{P}(\mathbb{N}^2)$. It is easy to check that $\hat{\varphi}$ is continuous. Moreover, 
	$$ \hat{\varphi}^{-1}[wIFS^d] = \mathcal{P}(\mathbb{N}^2) \setminus \{ A \colon \exists_n^\infty\: \forall_m^\infty\:\: (n,m) \in A \} .$$
Indeed, $B\in\{ A \colon \exists_n^\infty\: \forall_m^\infty\:\: (n,m) \in A \}$ implies $\hat{\varphi}(B)\notin wIFS^d$, since if $f:[0,1]^d\to[0,1]^d$ is a weak contraction then so is $f\upharpoonright[0,1]\times\{0\}^{d-1}$. On the other hand, if $B\notin\{ A \colon \exists_n^\infty\: \forall_m^\infty\:\: (n,m) \in A \}$ then $\hat{\varphi}(B)$ is in wIFS$^d$, since for each weak contraction $f:[0,1]\to[0,1]$ the map $g:[0,1]^d\to[0,1]^d$ given by $g(x_1,\ldots,x_d)=(f(x_1),0,\ldots,0)$ is a weak contraction as well. Thus, by Lemma \ref{fsigmadelta-complete} the set wIFS$^d$ is $G_{\delta\sigma}$-complete.
\end{proof}

Unfortunately, we were not able to solve the following problem.

\begin{question}
Let $d\in\mathbb{N}$. Is the set wIFS$^d$ Borel?
\end{question}

\newcommand{\nosort}[1]{}

\begin{center}
\flushleft{{\sl Addresses:}} \\
Pawe\l{} Klinga \\
Institute of Mathematics \\
University of Gda\'nsk \\
Wita Stwosza 57 \\
80 -- 952 Gda\'nsk \\
Poland\\
e-mail: pawel.klinga@ug.edu.pl
\end{center}

\begin{center}
\flushleft 
Adam Kwela \\
Institute of Mathematics \\
University of Gda\'nsk \\
Wita Stwosza 57 \\
80 -- 952 Gda\'nsk \\
Poland\\
e-mail: akwela@mat.ug.edu.pl
\end{center}

\end{document}